\newtheorem{thm}{Theorem}
\newtheorem{cor}[thm]{Corollary}
\newtheorem{prop}[thm]{Proposition}
\newtheorem{lem}[thm]{Lemma}
\theoremstyle{definition}
\newtheorem{defin}[thm]{Definition}
\newtheorem{rem}[thm]{Remark}
\DeclareMathOperator{\AGL}{AGL}
\DeclareMathOperator{\GL}{GL}
\newcommand{\F}{\ensuremath{{\mathbb{F}}}}
\newcommand{\ord}{\textup{ord}\,}
\begin{document}

\title[Affine-Linear Functional Graphs]{On the Number of Distinct Functional Graphs of Affine-Linear Transformations over Finite Fields}

\author[E. Bach and A. Bridy]{Eric Bach and Andrew Bridy}
\address{Eric Bach\\Department of Computer Sciences\\ University of Wisconsin-Madison\\
Madison, WI 53706, USA}
\email{bach@cs.wisc.edu}
\address{Andrew Bridy\\Department of Mathematics\\ University of Wisconsin-Madison\\
Madison, WI 53706, USA}
\email{bridy@math.wisc.edu}

\date{\today}

\begin{abstract}
We study the number of non-isomorphic functional graphs of affine-linear transformations from $(\F_q)^n$ to itself, and we prove upper and lower bounds on this quantity as $n\to\infty$. As a corollary to our result, we prove bounds on the number of conjugacy classes in the symmetric group $S_{q^n}$ that intersect $\AGL_n(q)$. 
\end{abstract}

\subjclass[2010]{Primary 37P05; Secondary 05C20, 11T55}

\keywords{Linear Algebra, Finite Fields, Functional Graphs}

\maketitle
\section{Introduction}

Let $X$ be a finite set and let $f:X\to X$ be a function, so that the pair $(X,f)$ defines a discrete dynamical system. We define the \emph{functional graph} of $(X,f)$ to be a directed graph with vertices at each element of $X$ and an edge from $x$ to $y$ if and only if $f(x)=y$. We denote the functional graph of $(X,f)$ by $G_{(X,f)}$.

In the setting where $X=(\F_q)^n$ and $f$ is a linear transformation, the structure of $G_{(X.f)}$ was explicitly determined by Elspas, who worked in the context of linear sequential networks in electrical engineering \cite{Elspas}. Wang, building on the work of Elspas, provided an explicit description of $G_{(X.f)}$ in the more general case where $f$ is affine-linear \cite{Wang}.

Rather than analyze the structure of the functional graph, our problem is to estimate the number of non-isomorphic functional graphs of affine-linear transformations from $(\F_q)^n$ to itself. Let $D_q(n)$ denote this quantity. We will prove the following:

\begin{thm}\label{thm: main}
For $q$ fixed and $n\to\infty$,
\begin{equation*}
\sqrt{n}\ll \log D_q(n) \ll \frac{n}{\log\log n}.
\end{equation*}
\end{thm}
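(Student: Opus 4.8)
The plan is to attack the two bounds by entirely different means: the lower bound by exhibiting an explicit family of maps with provably non-isomorphic graphs, and the upper bound by a structural classification followed by a counting estimate.

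For the lower bound I would restrict to \emph{nilpotent} linear maps $f(x)=Ax$ with $A$ nilpotent, which up to conjugacy are classified by the partition of $n$ recording the Jordan block sizes. Such a graph is a single rooted in-tree: every vector flows to the unique periodic point $0$, and the number of vertices at distance exactly $j$ from $0$ is $q^{\dim\ker A^{j}}-q^{\dim\ker A^{j-1}}$. The root is graph-theoretically distinguished (it is the only point lying on a cycle), so an isomorphism of functional graphs must preserve these level sizes, hence the entire sequence $(\dim\ker A^{j})_{j\ge1}$; since this sequence is exactly the partial sums of the conjugate partition, it recovers the partition. Thus distinct partitions give non-isomorphic graphs, so $D_q(n)\ge p(n)$, and the Hardy--Ramanujan estimate $\log p(n)\sim\pi\sqrt{2n/3}$ gives $\sqrt n\ll\log D_q(n)$.

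For the upper bound I would first pass from affine to linear maps. When $A-I$ is invertible, $x\mapsto Ax+b$ has a fixed point and is conjugate by a translation to $x\mapsto Ax$; in general the translation $b$ alters the behavior only on the generalized $1$-eigenspace and the characteristic-$p$ Jordan structure, contributing at most a subexponential factor. Invoking the Elspas--Wang description, the graph of a linear map is a tree part, coming from the nilpotent summand of the Fitting decomposition, grafted identically onto each vertex of a cyclic part, coming from the invertible summand. The tree part is governed by a partition of $\dim V_0\le n$ and so contributes at most $p(n)=e^{O(\sqrt n)}$ possibilities, negligible against $e^{O(n/\log\log n)}$. Everything then reduces to bounding the number of distinct cycle types of the invertible part. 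The crucial collapse is that this cycle type depends on the rational canonical form only through the multiset of pairs (multiplicative order $e$ of an eigenvalue, Jordan data): two irreducible factors of the same degree $d$ and the same root-order $e$ permute their cyclic pieces identically, and $d$ equals the order of $q$ in $(\Z/e\Z)^{\times}$, hence is determined by $e$. So the number of cycle types is at most the number of multisets of pairs $(e,\lambda)$ of total weight $\sum d\,|\lambda|\le n$. I would bound this by a generating-function/saddle-point argument in which the number of realizable orders $e$ with fixed $d$ is at most $\tau(q^{d}-1)$; the divisor bound $\log\tau(N)\le(1+o(1))(\log 2)\tfrac{\log N}{\log\log N}$ is precisely what manufactures the $\log\log n$, and together with the budget constraint (each used order costs dimension at least $d$) it forces the admissible configurations to number $e^{O(n/\log\log n)}$.

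I expect the main obstacle to be exactly this final estimate: one must simultaneously control the explosion in the number of available orders as $d$ grows (via the divisor bound for $q^{d}-1$) and the multiplicity freedom within each order, optimizing the trade-off against the total-dimension budget $n$. The secondary but necessary technical points are making the Elspas--Wang decomposition precise enough to justify that the graph sees only the order-and-Jordan data, so that the collapse over irreducibles of a fixed $(d,e)$ is legitimate, and checking that the translation $b$ contributes only a subexponential factor, after which the corollary on conjugacy classes of $S_{q^{n}}$ meeting $\AGL_n(q)$ follows by specializing to invertible maps. I note that the saddle-point optimization appears to give room to spare, in fact the stronger bound $e^{O(n/\log n)}$, comfortably inside the claimed $e^{O(n/\log\log n)}$.
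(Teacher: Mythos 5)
Your lower bound is correct and takes a genuinely different route from the paper. The paper builds, for each partition of $n$ into \emph{distinct} parts, a direct sum of companion matrices of primitive polynomials and separates the resulting graphs by their cycle lengths, which requires Zsigmondy's theorem to rule out $q^m-1$ being an LCM of smaller numbers $q^{\lambda_i}-1$. Your nilpotent construction instead separates graphs by the sizes of the distance-$j$ levels above the unique periodic point $0$; since these recover the sequence $\dim\ker A^j$ and hence the full Jordan partition, you get $D_q(n)\geq p(n)$ rather than $Q(n)$ (both of logarithmic order $\sqrt{n}$) with no appeal to Zsigmondy. The argument is sound and arguably cleaner than the paper's.

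Your upper bound follows essentially the same route as the paper: Wang's reduction of the affine contribution to a partition of the generalized $1$-eigenspace, Elspas's reduction of the linear part to the data $(\deg p_i,\ord p_i)$ plus a partition of each multiplicity, then counting admissible orders via $\tau(q^{d}-1)$. Your observation that $\deg p_i$ is the multiplicative order of $q$ modulo $\ord p_i$, hence redundant, is correct and slightly streamlines the bookkeeping relative to the paper, which carries both lists. Two caveats on the step you defer. First, the small degrees need separate treatment: for bounded $d$ the divisor bound $\tau(N)\leq 2^{c\log N/\log\log N}$ gives nothing useful, and the paper controls that range by the crude count $\prod_{d<d_0}\tau(q^{d}-1)\leq q^{\sum_{d<d_0}q^{d}}\leq q^{q^{d_0}}$ with $d_0\approx\tfrac{1}{2}\log_q n$, which is $q^{\sqrt{n}}$. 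Second, your closing claim that the optimization yields $e^{O(n/\log n)}$ ``with room to spare'' is false: a configuration with about $n/\log n$ irreducible factors, each of degree $d\approx\log_q n$ and each allowed up to $\tau(q^{d}-1)=\exp\bigl(c\,d\log q/\log\log q^{d}\bigr)$ choices of order, already forces this counting method to account for $\exp\bigl(\Theta(n/\log\log n)\bigr)$ possibilities. So $n/\log\log n$ is the genuine output of the method, not slack; improving it would require finer information about which divisors of $q^{d}-1$ actually arise as orders, as the paper's closing remark notes. This does not damage the stated theorem, but the hard step is exactly as hard as the theorem claims, and a complete write-up must carry out the split between small and large degrees rather than a single saddle-point estimate.
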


The upper bound in Theorem \ref{thm: main} is in some sense an improvement on the well-known fact that the number of conjugacy classes in $\GL_n(q)$ is $q^n+O(q^{\lfloor\frac{n-1}{2}\rfloor})$ \cite{Stanley}, as similar linear transformations have isomorphic functional graphs. In fact, Theorem \ref{thm: main} implies that there exist linear transformations which are ``similar" under conjugation by a non-linear permutation of $(\F_q)^n$. We formalize this notion as follows:

\begin{defin}\label{def: conjugate}
The dynamical systems $(X,f)$ and $(Y,g)$ are \emph{dynamically equivalent} if there exists a bijection $\sigma:X\to Y$ such that $\sigma^{-1}\circ g\circ\sigma=f$.
\end{defin}

It is easy to check that dynamical equivalence coincides with isomorphism of functional graphs, and we will use both concepts interchangeably. If $(X,f)$ and $(X,g)$ are dynamically equivalent and $X$ is understood, we say that $f$ and $g$ are dynamically equivalent, and we write $f\sim g$ for short. In this language, Theorem \ref{thm: main} can be restated as an estimate of the number of dynamical equivalence classes of affine-linear transformations $f:(\F_q)^n\to(\F_q)^n$.

Our result also has a group-theoretic interpretation. Let $G$ be the symmetric group of $X=(\F_q)^n$, so that $G\cong S_{q^n}$. Let $H=\AGL_n(q)$ be the group of invertible affine-linear transformations of $X$, so that $H\subseteq G$. Conjugacy classes of $G$ are cycle types, which are special isomorphism classes of functional graphs on $X$ (for those $f:X\to X$ that are bijections), so Theorem \ref{thm: main} yields the following corollary:

\begin{cor}\label{cor: Symmetric Group}
Let $A_q(n)$ be the number of conjugacy classes of $G$ that intersect $H$. Then
\begin{equation*}
A_q(n)\leq \exp\left(O\left(\frac{n}{\log\log n}\right)\right).
\end{equation*}
\end{cor}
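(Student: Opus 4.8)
The plan is to deduce Corollary~\ref{cor: Symmetric Group} directly from the upper bound in Theorem~\ref{thm: main}, by showing that $A_q(n) \le D_q(n)$. The key observation is that a conjugacy class of $G \cong S_{q^n}$ is exactly a cycle type, and a cycle type is precisely an isomorphism class of functional graphs arising from a \emph{bijection} $f: X \to X$. So the conjugacy classes of $G$ that intersect $H = \AGL_n(q)$ are exactly the cycle types realized by some invertible affine-linear transformation of $(\F_q)^n$.

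First I would make the counting correspondence precise. Each element of $H = \AGL_n(q)$ is an invertible affine-linear map $f:(\F_q)^n \to (\F_q)^n$, hence a permutation of $X$, and its cycle type in $S_{q^n}$ is determined by the isomorphism class of its functional graph $G_{(X,f)}$ (for a permutation, the functional graph is a disjoint union of directed cycles, whose multiset of lengths is the cycle type). Thus the map sending $f \in H$ to its conjugacy class in $G$ factors through the map sending $f$ to the isomorphism class of $G_{(X,f)}$: if two invertible affine-linear maps have isomorphic functional graphs, they have the same cycle type and lie in the same $G$-conjugacy class. Consequently the number $A_q(n)$ of distinct $G$-conjugacy classes meeting $H$ is at most the number of distinct functional-graph isomorphism classes of invertible affine-linear maps, which is in turn at most $D_q(n)$, the number over all (not necessarily invertible) affine-linear maps.

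Having established $A_q(n) \le D_q(n)$, the conclusion is immediate: applying the upper bound of Theorem~\ref{thm: main}, namely $\log D_q(n) \ll n/\log\log n$, gives $D_q(n) \le \exp\bigl(O(n/\log\log n)\bigr)$, and hence $A_q(n) \le \exp\bigl(O(n/\log\log n)\bigr)$, as claimed.

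There is essentially no obstacle here, since this is a formal corollary; the only point requiring care is confirming that the passage from $G$-conjugacy classes to functional-graph isomorphism classes is well-defined and non-increasing in count. This rests on the standard fact that two permutations of a finite set are conjugate in the full symmetric group if and only if they have the same cycle type, equivalently isomorphic functional graphs, which is exactly the specialization of Definition~\ref{def: conjugate} to bijective $f$ and $g$ on the same set $X$ with $\sigma \in G$. All the genuine difficulty lives in the upper bound of Theorem~\ref{thm: main} itself, which we assume.
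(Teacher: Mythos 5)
Your proposal is correct and matches the paper's own derivation: the paper likewise observes that conjugacy classes of $G$ are cycle types, hence isomorphism classes of functional graphs of bijections, so $A_q(n)\leq D_q(n)$ and the bound follows from Theorem~\ref{thm: main}. You have simply spelled out the same one-line reduction in more detail.
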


We can rephrase Corollary \ref{cor: Symmetric Group} in terms of counting derangements, which are permutations with no fixed points. Let $G$ act on the coset space $G/H$ by left multiplication. The number of conjugacy classes of $G$ that do not contain a derangement in this action is at most $\exp\left(O\left(\frac{n}{\log\log n}\right)\right)$. As the total number of conjugacy classes of $G$ is $P(q^n)$, it follows from the Hardy-Ramanujan asymptotic formula for the partition function \cite{Andrews} that the proportion of conjugacy classes of $G$ that contain a derangement approaches 1 as $n\to\infty$ .

There is significant work of Boston and others on the proportion of elements in a permutation group that act as derangements (e.g. \cite{BostonFixedPointFree},\cite{GuralnickWan}). We know of no other results on the proportion of conjugacy classes that contain a derangement.

\begin{rem}
This paper grew out of work on the number of distinct functional graphs that arise from quadratic polynomials $f(x)=a x^2 + b x + c \in\F_{2^n}[x]$, where $X=\F_{2^n}$. If we fix $\zeta\in\F_{2^n}$ of absolute trace 1, an easy argument using Hilbert's Theorem 90 shows that every such $f$ is conjugate by some $\alpha x+\beta\in\F_{2^n}[x]$ to either $x^2+ b x$ or $x^2+ b x + (b^2 + 1)\zeta$, and moreover no distinct polynomials in these two families are conjugate by a linear polynomial in $\F_{2^n}[x]$. (However, the two families collapse into one under conjugation by polynomials $\alpha x+\beta\in\overline{\F}_{2^n}[x]$. This, and the fact that there are exactly two inequivalent maps for each parameter $b$, are predicted by the cohomological theory of twists of dynamical systems \cite{SilvermanTwists}.) It follows from these ``normal forms" that there at at most $2^{n+1}-1$ distinct functional graphs in the quadratic family. 

Every quadratic polynomial is an affine-linear transformation of $X$ as an $\F_2$-vector space, so the improved upper bound in Theorem \ref{thm: main} applies to the special case of quadratic polynomial maps on $X$, but also to a much broader family of $X$ and $f$.
\end{rem}

\section{Bounds on $D_q(n)$}

The following useful definition is taken from \cite{LN} and will play a crucial role in our counting arguments.

\begin{defin}\label{def:order}
The \emph{order} of $f\in\mathbb{F}_q[x]$ with $f(0)\neq 0$ is the smallest positive $n$ such that $f\mid x^n-1$. We write $\ord f$ for the order of $f$.
\end{defin}

We record for later use a proposition counting orders of degree $k$ irreducible polynomials over $\F_q$.

\begin{prop}\label{prop:order count}
Let $O_q(k)$ be the number of integers that occur as orders of irreducible polynomials over $\F_q$ of degree $k$. Then
\begin{equation*}
O_q(k)=\sum_{d\mid k} \tau(q^d-1)\mu(k/d),
\end{equation*}
where $\tau(n)$ is the number of divisors of $n$. In particular, $O_q(k)\leq \tau(q^k-1)$.
\end{prop}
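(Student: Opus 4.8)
The plan is to identify exactly which integers arise as orders of degree-$k$ irreducibles, translate that into a divisor-sum identity for $\tau(q^d-1)$, and finish by Möbius inversion.

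First I would set up the standard dictionary between irreducible polynomials and the multiplicative orders of their roots. If $f\in\F_q[x]$ is irreducible of degree $k$ with $f(0)\neq 0$, pick a root $\alpha\in\F_{q^k}^*$; then $\ord f$ equals the multiplicative order $e$ of $\alpha$ in $\F_{q^k}^*$, since $f\mid x^n-1$ if and only if $\alpha^n=1$. The crux is to pin down the degree $k$ in terms of $e$: because $\alpha\in\F_{q^d}$ if and only if $\alpha^{q^d-1}=1$, i.e. $e\mid q^d-1$, the degree $[\F_q(\alpha):\F_q]$ is the least $d$ with $q^d\equiv 1\pmod e$, namely the multiplicative order of $q$ modulo $e$. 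Hence an integer $e$ occurs as the order of a degree-$k$ irreducible exactly when the multiplicative order of $q$ modulo $e$ equals $k$. The ``only if'' direction uses that $\F_{q^k}^*$ is cyclic, so every divisor $e$ of $q^k-1$ is realized as the order of some element, whose minimal polynomial then has degree equal to the multiplicative order of $q$ modulo $e$.

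Next I would count divisors by the order of $q$. For each $d$, the divisors of $q^d-1$ are precisely the integers $e$ for which the multiplicative order of $q$ modulo $e$ divides $d$, so grouping these divisors according to the \emph{exact} order of $q$ modulo $e$, and invoking the characterization above that $O_q(d')$ counts the $e$ with exact order $d'$, gives
\[
\tau(q^d-1)=\sum_{d'\mid d}O_q(d').
\]
Applying Möbius inversion to this identity yields $O_q(k)=\sum_{d\mid k}\mu(k/d)\,\tau(q^d-1)$, as claimed. The bound $O_q(k)\le\tau(q^k-1)$ is then immediate: taking $d=k$ in the displayed identity gives $\tau(q^k-1)=\sum_{d'\mid k}O_q(d')\ge O_q(k)$, since every summand is nonnegative.

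The main obstacle is the characterization step: correctly establishing that the degree of the minimal polynomial of $\alpha$ is the multiplicative order of $q$ modulo the multiplicative order of $\alpha$, and verifying both directions — existence of an element of each prescribed order via cyclicity of $\F_{q^k}^*$, and the matching of exact orders to exact degrees. Once this bijective dictionary is in place, the partition of divisors and the Möbius inversion are routine.
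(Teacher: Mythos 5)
Your proof is correct and follows essentially the same route as the paper's: both establish the identity $\tau(q^k-1)=\sum_{d\mid k}O_q(d)$ by matching divisors of $q^k-1$ with multiplicative orders of roots of irreducible polynomials of degree dividing $k$, then apply M\"obius inversion. Your version is slightly more explicit in pinning down the degree as the multiplicative order of $q$ modulo $e$ (which is what makes the grouping of divisors disjoint), but the substance is identical.
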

\begin{proof}
For $f\in\F_q[x]$ irreducible of degree $k$ with $f(0)\neq 0$, it is easy to show that $\ord f$ is the multiplicative order of the element $[x]$ in the field $\mathbb{F}_q[x]/(f)\cong \F_{q^k}$ \cite{LN}. Each nonzero $\alpha\in\F_{q^k}$ has an irreducible minimal polynomial over $\F_q$ of degree dividing $k$, and the order of this polynomial is the multiplicative order of $\alpha$. All divisors of $|\F_{q^k}^\times|=q^k-1$ occur as multiplicative orders of some $\alpha\in\F_{q^k}$, and all irreducible polynomials over $\F_q$ of degree dividing $k$ split in $\F_{q^k}$, so this proves
\begin{equation*}
\sum_{d\mid k} O_q(d) = \tau(q^k-1)
\end{equation*}
and the proposition follows by M\"obius inversion.
\end{proof}

In Proposition \ref{Inequivalent by Partitions} we construct a family of dynamically inequivalent linear transformations of $(\F_q)^n$ in one-to-one correspondence with partitions of $n$ with distinct parts. If we denote the number of partitions of $n$ into distinct parts by $Q(n)$, we have $\log Q(n)\sim \pi\sqrt{n/3}$ \cite{Andrews} and the lower bound in Theorem \ref{thm: main} follows.\\

\begin{prop}~\label{Inequivalent by Partitions}
There exists a family of dynamically inequivalent linear transformations $A:(\F_q)^n\to (\F_q)^n$ in bijection with the partitions of $n$ with distinct parts.
\end{prop}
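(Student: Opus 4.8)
The plan is to exhibit the family explicitly by nilpotent operators and to recover the partition from purely graph-theoretic data. Given a partition $\lambda$ of $n$ into distinct parts $k_1 > k_2 > \cdots > k_r$, I would let $A_\lambda\colon(\F_q)^n\to(\F_q)^n$ be the nilpotent linear map that is a direct sum of nilpotent Jordan blocks of sizes $k_1,\dots,k_r$ (on each block the shift $e_i\mapsto e_{i-1}$, $e_1\mapsto 0$). The assignment $\lambda\mapsto A_\lambda$ is visibly a bijection from partitions of $n$ into distinct parts onto this set of operators, so the whole content is to show that distinct $\lambda$ give \emph{dynamically inequivalent} maps, i.e.\ non-isomorphic functional graphs.

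First I would pin down a canonical vertex of the functional graph $G$ of $A_\lambda$. Since $A_\lambda$ is nilpotent, $1$ is not an eigenvalue, so $A_\lambda-I$ is invertible and $0$ is the unique fixed point; hence $0$ is the only vertex of $G$ carrying a self-loop, and because $A_\lambda^{k_1}=0$ every other vertex flows into $0$ under iteration. Thus $G$ is a single in-tree rooted at $0$, and for each $j\ge 0$ the number of vertices reaching $0$ in at most $j$ steps is exactly $\#\{v : A_\lambda^{\,j}v=0\}=q^{\dim\ker A_\lambda^{\,j}}$. Any isomorphism of functional graphs must carry the unique self-loop vertex to the unique self-loop vertex and preserve distance to it, so each count $q^{\dim\ker A_\lambda^{\,j}}$, and therefore each integer $\dim\ker A_\lambda^{\,j}$, is an invariant of the dynamical-equivalence class.

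To finish, I would read the partition off these invariants: from $\dim\ker A_\lambda^{\,j}=\sum_i\min(j,k_i)$ the successive differences $\dim\ker A_\lambda^{\,j}-\dim\ker A_\lambda^{\,j-1}=\#\{i:k_i\ge j\}$ recover the conjugate partition of $\lambda$, and hence $\lambda$ itself. Consequently distinct $\lambda$ produce non-isomorphic graphs, proving Proposition \ref{Inequivalent by Partitions}; together with $\log Q(n)\sim\pi\sqrt{n/3}$ this yields the lower bound in Theorem \ref{thm: main}. The step I expect to be the crux is precisely the passage from linear algebra to abstract graphs: dynamical equivalence allows arbitrary, possibly non-linear bijections, so the argument must extract $\dim\ker A_\lambda^{\,j}$ from graph-intrinsic information alone, which is why I anchor everything to the canonical root $0$ and to distances measured inside $G$. (In fact this invariant already separates \emph{all} partitions of $n$, but the distinct-part subfamily is all that is needed here.)
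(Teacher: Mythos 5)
Your proof is correct, and it takes a genuinely different route from the paper's. The paper builds $A_\lambda$ from companion matrices of primitive polynomials of degrees $\lambda_1,\dots,\lambda_r$, so each summand is invertible and its graph is a fixed point together with a single cycle of length $q^{\lambda_i}-1$; two partitions are then separated by cycle lengths, and ruling out an accidental coincidence $\mathrm{LCM}_{i\in S}[q^{\lambda_i}-1]=q^m-1$ requires Zsigmondy's theorem, with the exceptional case $(q,m)=(2,6)$ checked by hand. You instead take nilpotent Jordan blocks, observe that the graph is a single in-tree rooted at the unique self-loop vertex $0$, and extract the graph-intrinsic invariants $q^{\dim\ker A_\lambda^{\,j}}$ as vertex counts within distance $j$ of that root; the successive differences of $\dim\ker A_\lambda^{\,j}=\sum_i\min(j,k_i)$ then recover the conjugate partition. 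Your handling of the one genuinely delicate point --- that the conjugating bijection need not be linear, so the kernel dimensions must be read off from the abstract directed graph --- is exactly right. What your approach buys: it is more elementary (no primitive polynomials, no Zsigmondy), and it in fact separates \emph{all} $P(n)$ partitions of $n$ rather than only the $Q(n)$ with distinct parts, which would improve the implied constant in the lower bound of Theorem \ref{thm: main} from $\pi/\sqrt{3}$ to $\pi\sqrt{2/3}$ (though not its order of growth). What the paper's approach buys: its family consists of invertible transformations, i.e.\ elements of $\GL_n(q)\subseteq S_{q^n}$, whereas your nilpotent maps are not bijections; nothing in the proposition or in the rest of the paper requires invertibility, but a lower-bound companion to Corollary \ref{cor: Symmetric Group} on conjugacy classes of $S_{q^n}$ meeting $\AGL_n(q)$ would need the paper's version rather than yours.
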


\begin{proof}
Let $\lambda=(\lambda_1,\dots,\lambda_r)$ be a partition of $n$ with distinct parts. For $1\leq i\leq r$, let $A_i:(\F_q)^{\lambda_i}\to(\F_q)^{\lambda_i}$ be a linear transformation given by the companion matrix of any primitive polynomial of degree $i$ over $\F_q$, and let $A_{\lambda}:(\F_q)^n\to (\F_q)^n$ be defined as $A=A_1\oplus\dots\oplus A_r$. The proposition follows once we show that if $\lambda$ and $\mu$ are unequal partitions of $n$ with distinct parts, then $A_{\lambda}$ and $A_{\mu}$ are not dynamically equivalent.

Let $\lambda=(\lambda_1,\dots,\lambda_r)$ and $\mu=(\mu_1,\dots,\mu_r)$ be unequal partitions of $n$ with distinct parts. There exists some $m\in\mu$ such that $m\notin\lambda$. Any primitive polynomial of degree $i$ has order $q^i-1$, and it follows that the graph $G_{((\F_q)^n,B)}$ contains a cycle of length $q^m-1$ \cite{LN}. We show that $G_{((\F_q)^n,A)}$ has no cycle of this length.

The graph $G_{((\F_q)^{\lambda_i},A_i)}$ consists of one fixed point (the zero vector) and one cycle of length $q^{\lambda_i}-1$ \cite{Elspas}. The cycle lengths in $G_{((\F_q)^n,A)}$ are equal to least common multiples of the cycle lengths of the $G_{((\F_q)^{\lambda_i},A_i)}$ \cite{Elspas}. Therefore every cycle of $G_{((\F_q)^n,A)}$ has length equal to 
\begin{equation*}
\text{LCM}_{i\in S}[q^{\lambda_i}-1]
\end{equation*}
 for some subset $S\subseteq\{1,\dots,r\}$. We need to show that this LCM cannot equal $q^m-1$.

Assume by way of contradiction that $\text{LCM}_{i\in S}[q^{\lambda_i}-1]=q^m-1$. Each $q^{\lambda_i}-1$ divides $q^m-1$, so by the Euclidean Algorithm, each $\lambda_i$ divides $m$. In particular, $m\geq\lambda_i$ for each $i\in S$, but we know $m\neq\lambda_i$ for any $i$, so $m>\lambda_i$ for each $i\in S$. By Zsigmondy's Theorem \cite{Zsigmondy}, with the exception of $(q,m)=(2,6)$, there exists a prime divisor of $q^m-1$ that does not divide $q^k-1$ for $k<m$. This prime cannot divide $\text{LCM}_{i\in S}[q^{\lambda_i}-1]$, which is a contradiction. In the exceptional case $(q,m)=(2,6)$ we can verify directly that $2^6-1=63$ is not the LCM of any collection of smaller numbers of the form $2^{\lambda_i}-1$.
\end{proof}

To prove the upper bound in Theorem \ref{thm: main} we develop a series of preliminary results. Proposition \ref{prop:affine conjugacy} decomposes an affine-linear transformation into the direct sum of an arbitrary linear transformation and an affine transformation whose dynamical equivalence class is determined by an integer partition. Proposition \ref{prop:dynamical equivalence} gives a sufficient condition for dynamical equivalence of linear transformations that depends on the factorization of their characteristic polynomials. Counting characteristic polynomials yields the result.

\begin{lem}\label{lem: ConjugateByParts}
Let $V$ and $W$ be vector spaces, and let the pairs of linear transformations $A,B:V\to V$ and $C,D:W\to W$ be such that $A\sim B$ and $C\sim D$. Then $A\oplus C\sim B\oplus D$ as maps from $V\oplus W$ to itself.
\end{lem}

\begin{proof} We have $A=\phi^{-1}B\phi$ and $C=\psi^{-1}D\psi$ for bijections $\phi:V\to V$ and $\psi:W\to W$. Define $\theta:V\oplus W\to V\oplus W$ as $\phi$ on $V$ and $\psi$ on $W$, extending linearly to $V\oplus W$. Then $A\oplus C=\theta^{-1}(B\oplus D)\theta$.
\end{proof}

\begin{prop}\label{prop:affine conjugacy}
Let $V=(\F_q)^n$ and let $T:V\to V$ be defined as $Tx=Ax+b$, where $A:V\to V$ is linear and $b\in V$. There exists a direct sum decomposition $V=V_1\oplus V_2$ (where it is possible that $V_1=0$) with $T=T_1\oplus T_2$, $T_1x=A_1x+b_1$ and $T_2x=A_2x+b_2$, such that\newline

\begin{enumerate}
\item $T_2\sim A_2$.\\
\item The dynamical equivalence class of $T_1$ is determined by a partition of $\dim V_1$.
\end{enumerate}
\end{prop}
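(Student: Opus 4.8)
The plan is to peel off the part of $T$ on which the translation can be absorbed by a change of origin, and then to isolate the remaining ``unipotent'' part, whose dynamics I will try to control by a single nilpotent operator together with the translation vector.

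First I would invoke the primary decomposition of the linear part $A$. Factoring the characteristic polynomial of $A$ as $(x-1)^k r(x)$ with $r(1)\neq 0$, I set $V_1=\ker (A-I)^k$ and $V_2=\ker r(A)$; these are $A$-invariant, $V=V_1\oplus V_2$, and $N:=(A-I)|_{V_1}$ is nilpotent while $(A-I)|_{V_2}$ is invertible. Splitting $b=b_1+b_2$ accordingly, I would conjugate $T$ by a translation $\tau_c\colon x\mapsto x+c$. Since $\tau_c^{-1}T\tau_c(x)=Ax+\big((A-I)c+b\big)$, choosing $c\in V_2$ with $(A-I)c=-b_2$ (possible as $A-I$ is invertible on $V_2$) replaces $b$ by $b_1\in V_1$. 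Because $\tau_c$ is a bijection the new map is dynamically equivalent to $T$, and as $b_1\in V_1$ it splits as $T_1\oplus T_2$ with $T_1x=A_1x+b_1$ on $V_1$ and $T_2x=A_2x$ on $V_2$. This proves (1) immediately, since $T_2=A_2$; Lemma~\ref{lem: ConjugateByParts} is what legitimizes handling the two summands separately.

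For (2) I would study the affine map $T_1x=(I+N)x+b_1$ with $N$ nilpotent. In characteristic $p$ iteration collapses nicely: writing $T_1^{p^s}x=A_1^{p^s}x+\big(\sum_{i<p^s}A_1^i\big)b_1$, using $A_1^{p^s}=I+N^{p^s}$, and applying the Lucas-type vanishing of $\binom{p^s}{j}$ (together with $\sum_{i<p^s}\binom{i}{j}=\binom{p^s}{j+1}$), one obtains $T_1^{p^s}x=x+N^{p^s}x+N^{p^s-1}b_1$. Thus $T_1$ is a bijection all of whose cycle lengths are powers of $p$, and the points of period dividing $p^s$ are cut out by the affine equation $N^{p^s}x=-N^{p^s-1}b_1$. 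Counting solutions shows the entire cycle type is determined by the ranks of the powers of $N$ --- equivalently by the Jordan type of $N$, a partition of $\dim V_1$ --- together with the positions of the vectors $N^{p^s-1}b_1$ relative to $\operatorname{im}N^{p^s}$. I would then argue that this positional data collapses to one coarse parameter (the largest Jordan block of $N$ on whose leading coordinate $b_1$ is nonzero) and encode the resulting information as the partition claimed, finally appealing to the explicit functional-graph description of Elspas and Wang \cite{Elspas}\cite{Wang} to upgrade ``equal cycle type'' to ``dynamically equivalent.''

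The main obstacle is exactly this last identification in (2): proving that the functional graph of $T_1$ is genuinely a function of a partition of $\dim V_1$, rather than of the finer pair (nilpotent operator, translation vector). One must verify that translating $b_1$ within $\operatorname{im}N$ and conjugating by the centralizer of $N$ leave the functional graph unchanged, so that only the coarse positional invariant survives. The delicate feature is that all relevant thresholds are measured in powers of $p$, so distinct positional data can produce identical graphs; the bookkeeping therefore has to be carried out at the level of the sequence $s\mapsto\#\{x: N^{p^s}x=-N^{p^s-1}b_1\}$ rather than in terms of $N$ and $b_1$ individually, and it is this step where care is needed to land on a partition of exactly $\dim V_1$.
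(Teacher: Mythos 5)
Your treatment of part (1) is fine and is essentially the content of what the paper imports from Wang: split off the generalized eigenspace for the eigenvalue $1$, and on the complement absorb the translation by conjugating with $x\mapsto x+c$ where $(A-I)c=-b_2$. The computation $T_1^{p^s}x=x+N^{p^s}x+N^{p^s-1}b_1$ is also correct. The problem is part (2). With your choice of $V_1$ (the \emph{entire} generalized $1$-eigenspace), the dynamical equivalence class of $T_1$ is genuinely \emph{not} determined by a partition of $\dim V_1$: it depends on the Jordan type of $N$ \emph{and} on the position of $b_1$ relative to $\operatorname{im}N$, and this is strictly more information than a partition. Concretely, take $q=2$, $\dim V_1=2$. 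The unipotent affine maps $x\mapsto(I+N)x+b_1$ realize four distinct functional graphs: the identity ($4$ fixed points); $x\mapsto x+b_1$ with $b_1\neq 0$ (two $2$-cycles); $x\mapsto(I+N)x$ with $N$ a single Jordan block (two fixed points and one $2$-cycle); and $x\mapsto(I+N)x+b_1$ with $b_1\notin\operatorname{im}N$ (one $4$-cycle). There are only $P(2)=2$ partitions of $2$, so no encoding of these classes by a partition of $\dim V_1$ exists; the final "collapse to one coarse parameter and encode as the partition claimed" step, which you yourself flag as the main obstacle, cannot be carried out.

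The fix is to take a finer decomposition, which is exactly what the paper gets from Wang \cite{Wang}: the summands of the unipotent part on which the translation can be absorbed (equivalently, on which $T$ has a fixed point) must be moved into $V_2$, where $T\sim A$ and the class is governed later by Proposition \ref{prop:dynamical equivalence}; what remains in $V_1$ is a direct sum of affine maps $S_i:W_i\to W_i$ each of which is fixed-point-free and whose graph consists entirely of cycles of the single length $\ord(x-1)^{t_i+1}$ with $t_i=\dim W_i$. Only for that smaller $V_1$ is the class of $T_1$ a function of the partition $(t_1,\dots,t_r)$ of $\dim V_1$ alone. Your cycle-counting via the fibers of $N^{p^s}x=-N^{p^s-1}b_1$ could in principle be pushed through to prove this refined statement, but as written the proposal proves a weaker (and, as stated, false) version of (2), and the extra positional parameter would also need to be tracked in the counting argument of Theorem \ref{thm:count of affine dynamics} if one insisted on keeping your coarser $V_1$.
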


\begin{proof}
This follows from the work of Wang in \cite{Wang}. Wang proves that if there exists an $s\in V$ such that $Ts=s$, then $T\sim A$, which is the $V_1=0$ case of the proposition. If no such $s$ exists, then there exists a decomposition $T=T_1\oplus T_2$ as in the statement of the proposition such that $T_2\sim A_2$. In this case there exists another direct sum decomposition $V_1=W_1\oplus\cdots\oplus W_r$, with $T_1=S_1\oplus\cdots\oplus S_r$, $S_i:W_i\to W_i$ affine-linear, such that if we let $t_i=\dim W_i$, each graph $G_{(W_i,S_i)}$ consists of $\frac{q^{t_i}}{\ord (x-1)^{t_i+1}}$ cycles of length $\ord (x-1)^{t_i+1}$. Therefore the list $(t_1,\dots,t_r)$, which partitions $\dim V_1$, determines the dynamical equivalence class of each $S_i$. This determines the dynamical equivalence class of $T_1$ by Lemma \ref{lem: ConjugateByParts}.
\end{proof}

\begin{lem}\label{lem: ConjugateByOrders}
Let $V=(\F_q)^n$. Let the linear map $A:V\to V$ have characteristic polynomial $f^r$, where $f$ is irreducible over $\F_q$ and $f(0)\neq 0$. The dynamical equivalence class of $A$ is determined by $\deg f$, $\ord f$, and a partition of $r$.
\end{lem}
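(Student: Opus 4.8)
The plan is to reduce to the case of a single rational canonical block and then exploit the invertibility of $A$ to read off its functional graph as a disjoint union of cycles whose lengths are governed by orders of powers of $f$.

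First I would pass to the module-theoretic picture. Viewing $V$ as an $\F_q[x]$-module with $x$ acting by $A$, the hypothesis that the characteristic polynomial equals $f^r$ with $f$ irreducible means $V$ is $(f)$-primary, so the structure theorem yields $V \cong \bigoplus_{i=1}^{s}\F_q[x]/(f^{e_i})$ with $e_1\geq\cdots\geq e_s$ and $\sum_i e_i = r$; this partition $(e_1,\dots,e_s)$ of $r$ is exactly the similarity invariant of $A$. Correspondingly $A=\bigoplus_i A_i$, where $A_i$ is multiplication by $x$ on $R_i:=\F_q[x]/(f^{e_i})$. By Lemma \ref{lem: ConjugateByParts} it then suffices to show that the dynamical equivalence class of each block $A_i$ is determined by $\deg f$, $\ord f$, and the single integer $e_i$.

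Next I would analyze one block $A_e\colon R\to R$ with $R=\F_q[x]/(f^e)$. Since $f(0)\neq 0$, the class of $x$ is a unit in $R$, so $A_e$ is invertible and its functional graph is a disjoint union of directed cycles; hence its isomorphism class is pinned down by the number of cycles of each length. For $v\in R$, its cycle length is the least $k$ with $(x^k-1)v=0$. Writing the annihilator of $v$ as $(f^j)$ with $0\leq j\leq e$ (the only divisors of $f^e$ are powers of $f$), this cycle length is the least $k$ with $f^j\mid x^k-1$, namely $\ord(f^j)$. A short count in the local ring $R$ shows that exactly $q^{dj}-q^{d(j-1)}$ elements have annihilator $(f^j)$ for $1\leq j\leq e$ (with $v=0$ the unique fixed point), where $d=\deg f$; thus the block contributes $(q^{dj}-q^{d(j-1)})/\ord(f^j)$ cycles of length $\ord(f^j)$ for each such $j$.

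Finally I would invoke the formula for the order of a power of an irreducible polynomial: if $\ord f = t$ and $\mathrm{char}\,\F_q = p$, then $\ord(f^j)=t\,p^{\lceil\log_p j\rceil}$. This exhibits each $\ord(f^j)$ as a function of $\ord f$, $p$, and $j$ alone, so the whole cycle structure of $A_e$ depends only on $q$ (fixed), $\deg f$, $\ord f$, and $e$. Reassembling the blocks through Lemma \ref{lem: ConjugateByParts} then gives the claim. The main obstacle is this middle step: one must correctly attach the cycle length $\ord(f^j)$ to each annihilator ideal and count the elements realizing it, and then control $\ord(f^j)$. The order-of-power formula (Lidl--Niederreiter) is what makes the dependence collapse onto $\ord f$ and $p$ rather than onto $f$ itself, which is precisely the content that upgrades similarity invariance to dynamical-equivalence invariance.
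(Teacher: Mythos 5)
Your proof is correct and follows essentially the same route as the paper: decompose $A$ into cyclic blocks indexed by a partition of $r$, show each block's functional graph is determined by $\deg f$, $\ord f$, and the part $e_i$, and reassemble with Lemma~\ref{lem: ConjugateByParts}. The only difference is that where the paper cites Elspas for the cycle structure of each block, you derive it directly from the annihilator filtration of $\F_q[x]/(f^{e_i})$ and the formula $\ord(f^j)=\ord(f)\,p^{\lceil\log_p j\rceil}$, which is exactly the computation the paper's subsequent remark attributes to \cite[Theorem 6.63]{LN}.
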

\begin{proof}
There exist direct sum decompositions $V=V_1\oplus\cdots\oplus V_m$ and $A=A_1\oplus\cdots\oplus A_m$ such that in some basis of each $V_i$, $A_i:V_i\to V_i$ can be written as the companion matrix of $f^{\lambda_i}$ for some $\lambda_i$, and $\sum \lambda_i=r$.

Each graph $G_{(V_i,A_i)}$ is explicitly determined by the data $\ord f$, $\deg f$, and $\lambda_i$ \cite{Elspas}. This determines the dynamical equivalence class of each $A_i$, and by Lemma \ref{lem: ConjugateByParts}, the dynamical equivalence class of $A$.\end{proof}

\begin{rem}
The work of Elspas in \cite{Elspas} is intimately connected with the theory of linearly recurrent sequences over finite fields. In Lemma \ref{lem: ConjugateByOrders}, each $G_{(V_i,A_i)}$ consists of cycles that correspond to all linearly recurrent sequences over $\F_q$ with characteristic polynomial $f^{\lambda_i}$, and the lengths of the cycles are the periods of the sequences. The periods of the sequences and the number of sequences with each period can be computed from the data $\ord f$, $\deg f$, and $\lambda_i$ \cite[Theorem 6.63]{LN}.
\end{rem}

\begin{prop}\label{prop:dynamical equivalence}
Let $V=(\F_q)^n$ and let $A:V\to V$ be a linear transformation. Let $p\in\F_q[x]$ be the characteristic polynomial of $A$ and write its factorization into irreducibles as
\begin{equation*}
p=x^{r_0}\prod_{i=1}^mp_i^{r_i}
\end{equation*}
where the $p_i$ are distinct and no $p_i$ equals $x$. The dynamical equivalence class of $A$ is completely determined by an integer partition of each $r_i$ and two lists of $m$ positive integers: $\{\deg p_i\}$ and $\{\ord p_i\}$.
\end{prop}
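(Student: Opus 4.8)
The plan is to reduce the proposition to the two preceding lemmas by passing to the primary decomposition of $V$ as an $\F_q[x]$-module, with $x$ acting as $A$. Writing $p_0=x$, the factorization $p=\prod_{i=0}^m p_i^{r_i}$ yields a direct sum decomposition $V=V_0\oplus V_1\oplus\cdots\oplus V_m$, where $V_i=\ker\bigl(p_i(A)^{r_i}\bigr)$ is $A$-invariant and $A_i:=A|_{V_i}$ has characteristic polynomial $p_i^{r_i}$. Choosing a basis adapted to this decomposition exhibits $A$ as $A_0\oplus A_1\oplus\cdots\oplus A_m$ up to a linear change of coordinates, which is a special case of dynamical equivalence; hence $A\sim A_0\oplus\cdots\oplus A_m$. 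Applying Lemma \ref{lem: ConjugateByParts} inductively to the $m+1$ summands, I reduce the problem to showing that the dynamical equivalence class of each $A_i$ is determined by the data attached to $p_i^{r_i}$.

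For each $i\geq 1$ the factor $p_i$ is irreducible with $p_i(0)\neq 0$, so Lemma \ref{lem: ConjugateByOrders} applies verbatim: the class of $A_i$ is determined by $\deg p_i$, $\ord p_i$, and an integer partition of $r_i$ (the multiset of elementary-divisor exponents of $A_i$ within the $p_i$-primary part). Summed over $i=1,\dots,m$, this accounts exactly for the two lists $\{\deg p_i\}$ and $\{\ord p_i\}$ together with partitions of $r_1,\dots,r_m$.

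The one summand not covered by Lemma \ref{lem: ConjugateByOrders} is $A_0$, whose characteristic polynomial is $x^{r_0}$; since $x(0)=0$ it has no order, and I must treat it by hand. Here $A_0$ is nilpotent, and the same module-theoretic splitting writes $V_0$ as a direct sum of cyclic pieces $\F_q[x]/(x^{\lambda_j})$ with $\sum_j\lambda_j=r_0$, so that $A_0$ is dynamically equivalent to a direct sum of companion matrices of the $x^{\lambda_j}$. Elspas's explicit description \cite{Elspas} shows that the functional graph of the companion matrix of $x^{\lambda_j}$ is a single tree rooted at the fixed point $0$, depending only on $\lambda_j$ and $q$; invoking Lemma \ref{lem: ConjugateByParts} once more, the class of $A_0$ is determined by the partition $(\lambda_1,\lambda_2,\dots)$ of $r_0$. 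Combining the classes of $A_0,\dots,A_m$ via Lemma \ref{lem: ConjugateByParts} then pins down the class of $A$, as claimed.

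I expect the proof to be mostly assembly: the genuine dynamical content is already isolated in Lemma \ref{lem: ConjugateByOrders}, and the proposition is essentially its globalization across all primary components. The only step requiring independent care is the nilpotent factor $x^{r_0}$, which Lemma \ref{lem: ConjugateByOrders} explicitly excludes; the mild obstacle there is to confirm that its functional graph depends only on the Jordan (i.e.\ partition) data and not on any order-type invariant, which is immediate from Elspas's description since every vector flows to the unique fixed point $0$.
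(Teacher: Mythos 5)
Your proposal is correct and follows essentially the same route as the paper: decompose $V$ into primary components, handle the factors $p_i$ with $p_i(0)\neq 0$ via Lemma \ref{lem: ConjugateByOrders}, treat the nilpotent part by its partition of $r_0$, and reassemble with Lemma \ref{lem: ConjugateByParts}. The only cosmetic difference is that the paper disposes of $A_0$ by noting that the partition of $r_0$ fixes its similarity class (and similar maps are dynamically equivalent), whereas you additionally describe its functional graph via Elspas; both are fine.
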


\begin{proof} By the theory of the Jordan canonical form \cite{HK} there exist direct sum decompositions $A=A_0\oplus\cdots\oplus A_m$ and $V=V_0\oplus\cdots\oplus V_m$ where $A_i:V_i\to V_i$, the characteristic polynomial of $A_0$ is $x^{r_0}$, and the characteristic polynomial of $A_i$ is $p_i^{r_i}$ for $i\geq 1$. The Jordan form of the nilpotent map $A_0$ is specified by a partition of $r_0$ in which each part is the size of a Jordan block, so this partition determines the similarity class of $A_0$ and hence the dynamical equivalence class (if two linear maps are similar, they are dynamically equivalent).

Suppose that $\deg p_i$ and $\ord p_i$ are given for $i\geq 1$. Specifying a partition of each $r_i$ determines the dynamical equivalence class of each $A_i$ by Lemma \ref{lem: ConjugateByOrders}, which in turn determines the dynamical equivalence class of $A$ by Lemma \ref{lem: ConjugateByParts}.
\end{proof}

Before proceeding with the proof of the upper bound in Theorem \ref{thm: main}, we record a proposition that will be needed at a key moment in the counting argument.

\begin{prop}\label{prop: Maximum Number of Irreducible Factors}
Let $i_q(n)$ denote the maximum possible number of distinct irreducible factors of a degree $n$ polynomial over $\F_q$. Then
\begin{equation*}
i_q(n) = O\left(\frac{n}{\log n}\right).
\end{equation*}
\end{prop}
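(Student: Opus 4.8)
The plan is to reduce the problem to a pure counting question and then balance two competing quantities. The key observation is that for a fixed degree budget the distinct irreducible factors of small degree are the most ``efficient,'' but there are only finitely many irreducibles of each small degree, and it is this second constraint that produces the $\log n$ in the denominator. Concretely, I would start with a degree-$n$ polynomial $f\in\F_q[x]$ and let $g_1,\dots,g_t$ be its distinct monic irreducible factors. Since the $g_j$ are distinct irreducibles, their product divides $f$, so that
\begin{equation*}
\sum_{j=1}^{t}\deg g_j\leq \deg f = n.
\end{equation*}
Thus it suffices to bound the largest number $t$ of distinct monic irreducibles over $\F_q$ whose degrees sum to at most $n$.

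Next I would introduce a threshold parameter $k$ and split the factors by degree. Let $I(k)$ be the number of monic irreducible polynomials over $\F_q$ of degree strictly less than $k$. Crudely bounding irreducibles by all monic polynomials gives
\begin{equation*}
I(k)\leq\sum_{d=0}^{k-1}q^d=\frac{q^k-1}{q-1}<q^k.
\end{equation*}
Among $g_1,\dots,g_t$ at most $I(k)$ can have degree less than $k$, so at least $t-I(k)$ of them have degree at least $k$; these large factors alone contribute at least $k\,(t-I(k))$ to the degree sum. Combining this with the bound above yields $n\geq k\,(t-I(k))$, and hence
\begin{equation*}
t\leq I(k)+\frac{n}{k}<q^{k}+\frac{n}{k},
\end{equation*}
valid for every positive integer $k$.

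The final step is to choose $k$ to balance the terms $q^k$ and $n/k$. Taking $k=\lfloor\tfrac12\log_q n\rfloor$ gives $q^{k}\leq n^{1/2}$, while for all large $n$ one has $k\geq\tfrac13\log_q n$ and hence $n/k\leq 3n/\log_q n$, so that
\begin{equation*}
t< n^{1/2}+\frac{3n}{\log_q n}.
\end{equation*}
Since $q$ is fixed we have $\log_q n=\log n/\log q$ and $n^{1/2}=o(n/\log n)$, so $t=O(n/\log n)$. As this holds for an arbitrary degree-$n$ polynomial, it gives $i_q(n)=O(n/\log n)$.

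The argument is entirely elementary, so there is no serious obstacle; the only genuine decision is the choice of the split parameter $k$, and the trade-off $q^{k}+n/k$ is exactly what forces the logarithmic denominator. The point that must not be overlooked is that the supply of small-degree irreducibles is itself limited (at most $\approx q^{k}$ below degree $k$), which is precisely why one cannot beat $O(n/\log n)$. Indeed, forming the product of \emph{all} irreducibles of degree up to $\approx\log_q n$ shows the bound is achieved up to a constant, so this estimate is sharp and no asymptotically smaller bound of this shape is possible.
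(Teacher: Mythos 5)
Your proof is correct, and it takes a genuinely different and more elementary route than the paper's. The paper first reduces to the extremal polynomial produced by a greedy algorithm (the product of all irreducibles of degree less than $k$ times some irreducibles of degree $k$) and then needs both the upper bound $N_q(j)\leq q^j/j$ and the lower bound $N_q(j)\geq q^j/j-q^{j/2+1}/(j(q-1))$ on the number of irreducibles of degree $j$ to establish the ratio inequality $\sum_{j\leq k}N_q(j)\leq\frac{3}{k}\sum_{j\leq k}jN_q(j)$, from which $i_q(n)\leq 3n/k$ and $k\geq\frac{1}{2}\log_q n$ follow. You instead bound an arbitrary degree-$n$ polynomial directly: the degrees of its distinct irreducible factors sum to at most $n$, at most $q^k$ of those factors can have degree below a threshold $k$, and each remaining factor costs at least $k$, giving $t\leq q^k+n/k$, after which optimizing $k$ finishes. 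This avoids the greedy reduction and, more importantly, avoids any lower bound on the number of irreducible polynomials --- only the trivial count of monic polynomials of each degree is used --- so your argument is shorter and self-contained, and your constant is in fact slightly better than the paper's. What the paper's approach buys is that it follows the actual extremal configuration, which is also what your closing remark about sharpness identifies; for the stated $O(n/\log n)$ bound nothing is lost by your method. One trivial caveat: the division by $k$ requires $k\geq 1$, i.e., $n\geq q^2$, but the finitely many small $n$ are absorbed into the implied constant.
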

\begin{proof}
For $q\geq 3$ it is proved in \cite[Lemma A1]{KB} that
\begin{equation*}
i_q(n)\leq\frac{n}{\log_q(n)-3},
\end{equation*}
which immediately implies the proposition when $q\neq 2$. As we only require a weaker big-$O$ estimate, we present a simplified version of the proof in \cite{KB} which also works for $q=2$.

For $f\in\F_q[x]$, let $\omega(f)$ denote the number of distinct irreducible factors of $f$. We construct $f$ such that $\omega(f)=i_q(n)$ by a greedy algorithm. That is, first multiply together all degree 1 irreducibles, then all degree 2 irreducibles, and so on, until multiplying $f$ by another irreducible would raise its degree higher than $n$. Then $\deg f\leq n$ and no degree $n$ polynomials have more distinct irreducible factors than $f$. It suffices to prove the proposition for polynomials of the form
\begin{equation*}
f = g\left(\prod_{\stackrel{\text{irreducible }p\in\F_q[x]}{\deg p <k}} p\right)
\end{equation*}
where $g$ is a product of $m$ irreducible polynomials of degree $k$, each of which appears with multiplicity 1. Let $N_q(j)$ denote the number of irreducible polynomials over $\F_q$ of degree $j$. We have
\begin{equation}\label{eqn: omega}
\omega(f)=\sum_{j=1}^{k-1}N_q(j) + m
\end{equation}
and
\begin{equation}\label{eqn: degree}
\deg(f) = \sum_{j=1}^{k-1}jN_q(j) + mk\leq n.
\end{equation}

We now show that the inequality
\begin{equation}\label{eqn: N_qInequality}
\sum_{j=1}^{k}N_q(j)\leq \frac{3}{k}\sum_{j=1}^k jN_q(j)
\end{equation}
holds for large $k$. As $N_q(j)\leq q^j/j$ \cite{LN} we have
\begin{align*}
\sum_{j=1}^k N_q(j) \leq \sum_{j=1}^k \frac{q^j}{j} & \leq 
\sum_{1\leq j\leq k/2} q^j + \sum_{k/2<j\leq k} \frac{q^j}{k/2}\\
& \leq \frac{q^{k/2+1}-q}{q-1} + \frac{2q^{k+1}-2q^{k/2}}{k(q-1)}\\
& \leq \frac{1}{q-1}\left(q^{k/2+1}+\frac{2q^{k+1}}{k}\right).
\end{align*}
As $N_q(j)\geq q^j/j - q^{j/2+1}/(j(q-1))$ \cite{LN} we have
\begin{align*}
\frac{3}{k}\sum_{j=1}^k jN_q(j) & \geq\frac{3}{k}\left(\sum_{j=1}^k q^j-\frac{q^{j/2+1}}{q-1}\right) \\
& = \frac{1}{q-1}\left(\frac{3q^{k+1}-3q}{k} - \frac{3q^{(k+1)/2+1}-3q^{3/2}}{k(q-1)}\right).
\end{align*}
Therefore equation \ref{eqn: N_qInequality} holds if
\begin{equation*}
q^{k/2+1}+\frac{2q^{k+1}}{k} \leq \frac{3q^{k+1}-3q}{k} - \frac{3q^{(k+1)/2+1}-3q^{3/2}}{k(q-1)}.
\end{equation*}
or equivalently
\begin{equation*}
q^{k/2+1}+\frac{3q}{k} + \frac{3q^{(k+1)/2+1}}{k(q-1)} \leq \frac{q^{k+1}}{k} + \frac{3q^{3/2}}{k(q-1)}.
\end{equation*}
Comparing powers of $q$ on both sides, it is clear that this inequality holds for large $k$.

Returning to equations \ref{eqn: omega} and \ref{eqn: degree}, we use \ref{eqn: N_qInequality} to conclude
\begin{equation*}
i_q(n)=\omega(f) = \sum_{j=1}^{k-1}N_q(j) + m\leq \frac{3}{k}\left(\sum_{j=1}^{k-1}jN_q(j)+mk\right)\leq \frac{3n}{k}.
\end{equation*}
It only remains to show $k\geq C\log n$ for some $C$. By our construction of $f$, the largest that $n$ can be for a given $k$ occurs when $g$ is the product of all degree $k$ irreducible polynomials over $\F_q$. For this $g$, $n\leq k+\sum_{j=1}^k jI_j$ because if $n$ exceeded this amount, we could add a degree $k+1$ irreducible factor to $f$. So
\begin{equation*}
n\leq k+\sum_{j=1}^k jI_j\leq k+\sum_{j=1}^k q^k\leq k+\frac{q^{k+1}-q}{q-1} \leq k+q^{k+1}.
\end{equation*}
When $k$ is large, $k<q^{k+1}$. Recall that $q\geq 2$. These imply that $n\leq k+q^{k+1}\leq q^{k+2}$, so $\log_q(n)\leq k+2$. For $n\geq q^4$
\begin{equation*}
k\geq \log_q(n)-2\geq \frac{1}{2}\log_q(n),
\end{equation*}
which completes the proof.
\end{proof}

We now combine Propositions \ref{prop:affine conjugacy} and \ref{prop:dynamical equivalence} to give an upper bound on $D_q(n)$, completing the proof of Theorem \ref{thm: main}.

\begin{thm}\label{thm:count of affine dynamics}
\begin{equation*}
D_q(n) = \exp\left(O\left(\frac{n}{\log\log n}\right)\right).
\end{equation*}
\end{thm}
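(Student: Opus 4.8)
The plan is to bound $D_q(n)$ by counting the data that, via Propositions~\ref{prop:affine conjugacy} and \ref{prop:dynamical equivalence}, completely determines the dynamical equivalence class of an affine-linear $T:(\F_q)^n\to(\F_q)^n$. First, Proposition~\ref{prop:affine conjugacy} reduces the affine case to the linear case: it writes $T=T_1\oplus T_2$ where $T_2\sim A_2$ is genuinely linear and $T_1$'s class is pinned down by a single integer partition of $\dim V_1$. Since the number of partitions of any integer $\le n$ contributes only a factor of $\exp(O(\sqrt n))$ (by the Hardy--Ramanujan estimate already cited), this affine part is negligible against the target bound $\exp(O(n/\log\log n))$. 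So it suffices to bound the number of dynamical equivalence classes of \emph{linear} maps $A:(\F_q)^n\to(\F_q)^n$, and for those I would invoke Proposition~\ref{prop:dynamical equivalence}: the class is determined by a partition of each exponent $r_i$ together with the lists $\{\deg p_i\}$ and $\{\ord p_i\}$ of the irreducible factors of the characteristic polynomial.

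The counting then breaks into three independent pieces that multiply together. First, the number of ways to choose the multiset of degrees $\{\deg p_i\}$ with $\sum r_i\deg p_i\le n$: this is at most the number of ways to write $n$ as an ordered-or-unordered combination of factor data, and the crucial input is Proposition~\ref{prop: Maximum Number of Irreducible Factors}, which says the number $m$ of distinct irreducible factors is $O(n/\log n)$. Second, for each factor one must record its order $\ord p_i$; by Proposition~\ref{prop:order count} the number of possible orders of a degree-$k$ irreducible is at most $\tau(q^k-1)$, and I would bound the total contribution using the standard estimate $\tau(N)=\exp(O(\log N/\log\log N))$ with $N=q^k-1$, so $\log\tau(q^k-1)=O(k/\log k)$. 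Third, the partitions of the exponents $r_i$ contribute $\prod_i Q'(r_i)\le\exp(O(\sum\sqrt{r_i}))=\exp(O(\sqrt n))$, again negligible. The dominant term is the order-counting: summing $O(k/\log k)$ over the (at most $O(n/\log n)$) factors, with total weighted degree $\le n$, is where the $\log\log n$ in the denominator emerges, since the worst case spreads the degree across factors of size roughly $\log n$.

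Concretely, I would organize the estimate as
\begin{equation*}
\log D_q(n)\le O(\sqrt n)+\sum_{i}\log\tau(q^{\deg p_i}-1)+\log(\text{number of degree choices}),
\end{equation*}
and then show each piece is $O(n/\log\log n)$. The degree-choice factor is controlled by a partition-type count refined by Proposition~\ref{prop: Maximum Number of Irreducible Factors}: restricting to at most $O(n/\log n)$ parts summing to $\le n$ gives a count whose logarithm is $O(n\log\log n/\log n)$ or better, comfortably within budget. For the order sum, the extremal configuration (which I expect to require a short optimization) is to use many factors of a common degree $k$; balancing the constraint $mk\le n$ against $m\cdot O(k/\log k)$ yields a bound of order $n/\log k$, and since $k$ can be forced as large as $\Theta(\log n)$ before the number of available irreducibles of that degree is exhausted, this becomes $O(n/\log\log n)$.

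The main obstacle will be the order-counting step: one must choose the degree $k$ at which to concentrate the factors so as to maximize $\sum\log\tau(q^{\deg p_i}-1)$ subject to $\sum\deg p_i\le n$ (ignoring multiplicities, which only help), and verify that the optimum is genuinely $\Theta(n/\log\log n)$ rather than something larger. The delicate point is that $\log\tau(q^k-1)$ grows like $k/\log k$, which is increasing in $k$, pushing toward large $k$; but large $k$ limits the number of available distinct irreducible factors (there are only $N_q(k)\approx q^k/k$ of them) and forces fewer factors. Pinning down that the interplay of ``many small factors'' versus ``few large factors'' lands exactly at the $\log\log n$ scale, and doing so uniformly in $q$, is the crux of the argument; everything else reduces to the already-established propositions and standard asymptotics for $\tau$ and the partition function.
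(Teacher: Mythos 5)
Your outline follows the paper's proof essentially step for step: the same reduction via Propositions~\ref{prop:affine conjugacy} and~\ref{prop:dynamical equivalence}, the same bound $\prod_i\tau(q^{d_i}-1)$ on the order data via Proposition~\ref{prop:order count} together with the Wigert divisor bound, and the same resolution of the crux --- the paper makes your ``optimization'' explicit by splitting the product at the threshold degree $d=\frac{\log n}{2\log q}$, bounding the small-degree part trivially by $q^{q^d}=q^{\sqrt n}$ and the large-degree part by $\exp(O(n/\log\log n))$. Your cruder bound on the degree-choice count also works (the paper instead cites the factorization-pattern asymptotic $\exp(B\sqrt{n\log n}+O(\sqrt n))$; both are negligible). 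The one genuine error is the claim that the exponent partitions contribute $\exp(O(\sum_i\sqrt{r_i}))=\exp(O(\sqrt n))$: the identity $\sum_i\sqrt{r_i}=O(\sqrt n)$ is false when the number $m$ of distinct irreducible factors is large (e.g.\ $m=\Theta(n/\log n)$ factors each with $r_i=4$ gives $\sum_i\sqrt{r_i}=\Theta(n/\log n)$ and $\prod_iP(r_i)=\exp(\Theta(n/\log n))$). The paper repairs exactly this point by Cauchy--Schwarz, $\sum_{i=0}^m\sqrt{r_i}\le\sqrt{m+1}\sqrt{\sum_i r_i}$, combined with $m+1=O(n/\log n)$ from Proposition~\ref{prop: Maximum Number of Irreducible Factors}, yielding $\exp(O(n/\sqrt{\log n}))$ --- still $o(\exp(n/\log\log n))$, so your conclusion stands, but the intermediate bound as you stated it is wrong and the fix requires invoking the irreducible-factor count a second time.
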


\begin{proof}
Let $V=(\F_q)^n$. Let $T:V\to V$ be defined by $Tx=Ax+b$, where $A$ is linear and $b\in V$. By Proposition \ref{prop:affine conjugacy}, $V=V_1\oplus V_2$ and $T=T_1\oplus T_2$ such that the dynamical equivalence class of $T_1$ is determined by a partition of $\dim V_1$ and $T_2\sim A_2$ for some linear $A_2:V_2\to V_2$. (It may be the case that $V_1=0$ and $T_1=0$.) Let 
\begin{equation*}
p=x^{r_0}\prod_{i=1}^mp_i^{r_i}
\end{equation*}
be the characteristic polynomial of $A_2$. Note that $\deg p\leq n$. By Proposition \ref{prop:dynamical equivalence}, the dynamical equivalence class of $A_2$ is determined by a partition of each $r_i$ and the two lists of $m$ integers $\{\deg p_i\}$ and $\{\ord p_i\}$. We estimate the number of ways of specifying these data. Assume for the moment that the $\deg p_i$ and the partitions of the $r_i$ are given and that we need to assign orders to the $p_i$.

Let $d_i=\deg p_i$. By Proposition \ref{prop:order count}, the number of possible ways to assign the $\ord p_i$ is
\begin{equation*}
\prod_{i=1}^m O_q(d_i)\leq \prod_{i=1}^m\tau(q^{d_i}-1).
\end{equation*}
We split this into two products over the ranges $d_i<d$ and $d_i\geq d$ for some $d$ to be chosen later. First we estimate the quantity
\begin{equation*}
C_1=\prod_{d_i<d}\tau(q^{d_i}-1).
\end{equation*}
Using the trivial estimate $\tau(x)< x+1$,
\begin{equation*}
C_1 < \prod_{d_i<d}q^{d_i} = q^{\sum_{d_i<d} d_i}.
\end{equation*}
Each $d_i$ is the degree of a distinct irreducible polynomial over $\F_q$. As in the proof of Proposition \ref{prop: Maximum Number of Irreducible Factors}, $N_q(k)\leq\frac{q^k}{k}$, so
\begin{equation*}
\sum_{d_i<d}d_i\leq \sum_{k=1}^{d-1} kI_k \leq \sum_{k=1}^{d-1} q^k \leq q^d. 
\end{equation*}
Therefore $C_1< q^{q^d}$.

Now we estimate
\begin{equation*}
C_2=\prod_{d_i\geq d}\tau(q^{d_i}-1).
\end{equation*}
By the estimate on $\tau(x)$ in \cite[Theorem 8.8.9]{ANT}, there exists $c$ such that
\begin{equation*}
\tau(q^{d_i}-1)\leq 2^\frac{c\log(q^{d_i}-1)}{\log\log(q^{d_i}-1)}.
\end{equation*}
This implies
\begin{equation*}
C_2\leq\prod_{d_i\geq d}2^\frac{cd_i\log q}{\log\log(q^d-1)}\leq
2^\frac{c\log q\sum_{d_i\geq d} d_i}{\log\log q^{d-1}}
\end{equation*}
where we use the inequality $q^d-1\geq q^{d-1}$, which is true for $q\geq 2$ and $d\geq 1$. Also, $\sum_{d_i\geq d} d_i\leq n$, so
\begin{equation*}
C_2\leq 2^\frac{c(\log q)n}{\log (d-1) + \log\log q}.
\end{equation*} 

Putting these estimates together we have
\begin{equation*}
\prod_{i=1}^m\tau(q^{d_i}-1) = C_1C_2\leq \exp\left(q^d\log q + \log 2\frac{c(\log q)n}{\log (d-1) + \log\log q} \right).
\end{equation*}
Choose $d=\frac{\log n}{2\log q}$ and note $\log(d-1)\geq \log(d/2)$ for $d\geq 1$. Then
\begin{align*}
q^d\log q + \log 2\frac{c(\log q)n}{\log (d-1) + \log\log q}  & \leq
n^{1/2}\log q + \frac{cn\log 2\log q}{\log \frac{\log n}{4\log q} + \log\log q}\\
& = O\left(\frac{n}{\log\log n}\right)
\end{align*}

Now we estimate the number of ways that the $r_i$ and $\deg p_i$ can occur. Because $\sum_{i=1}^m r_i\deg p_i=\dim V_2$, the $\deg p_i$ form a partition of $\dim V_2$ in which each appears $r_i$ times. This is a ``factorization pattern" of $\dim V_2$ as in \cite{Factorization} which is specified by first picking a partition of $\dim V_2$ into parts of size $k$, each of which occurs $s_k$ times, and then further dividing each $s_k$ into parts $r_i$. Let $b(n)$ denote the number of factorization patterns of $n$. It is mentioned in \cite{Partitions} and proved in \cite{Mitchell} that
\begin{equation*}
b(n) = \exp\left(B\sqrt{n\log n}+O(\sqrt{n})\right).
\end{equation*}

Finally, we need to choose a partition of each $r_i$ and a dynamical equivalence class for $T_1$ given by a partition of $\dim V_1$. If $P(x)$ denotes the partition function, we have $P(x)\leq\exp(K\sqrt{x})$ \cite{Andrews}. The number of ways to specify all these partitions is
\begin{align*}
&P(\dim V_1)\prod_{i=0}^m P(r_i) \leq P(n)\exp\left(K\sum_{i=0}^m \sqrt{r_i}\right)\\
& \leq \exp\left(K\sqrt{n}+K\sqrt{m+1}\sqrt{\sum_{i=0}^{m} r_i}\right)
\leq \exp(K\sqrt{n}(1+\sqrt{m+1}))\\
& = \exp\left(K\sqrt{n}\left(1+O\left(\sqrt{\frac{n}{\log n}}\right)\right)\right)
= \exp\left(O\left(\frac{n}{\sqrt{\log n}}\right)\right).
\end{align*}
The inequality $\sum_{i=0}^m \sqrt{r_i}\leq \sqrt{m+1}\sqrt{\sum_{i=0}^{m} r_i}$ follows from the standard fact that the arithmetic mean of the $\sqrt{r_i}$ is at most the root mean square, and $m+1$ is the number of distinct irreducible factors of $p$, so $m+1 = O(n/\log{n})$ by Proposition \ref{prop: Maximum Number of Irreducible Factors}.

Putting this all together, the number of ways to choose a dynamical equivalence classes for $T_1$ and $T_2$, and therefore a dynamical equivalence class for $T$ by Lemma \ref{lem: ConjugateByParts}, is at most
\begin{equation*}
b(n)\exp\left(O\left(\frac{n}{\sqrt{\log n}}\right)\right)\exp\left(O\left(\frac{n}{\log\log n}\right)\right) = \exp\left(O\left(\frac{n}{\log\log n}\right)\right) 
\end{equation*}
which completes the proof of Theorem \ref{thm: main}
\end{proof}

\begin{rem}
It seems possible that the estimates in Theorem \ref{thm:count of affine dynamics} could be improved. The main estimate used for $\tau(q^d-1)$ is the worst-case estimate on $\tau(x)$ that follows from the prime number theorem. It may be possible to give a better estimate based on the distribution of multiplicative orders of $q$ modulo various integers $n$. (If $n$ divides $q^d-1$, then $d$ is a multiple of the multiplicative order of $q$ mod $n$.)  Questions along these lines tend to be difficult, even for $q=2$. See \cite{Arnold}, \cite{KP}, \cite{AverageOrders}, \cite{MRS}, and \cite{Pomerance} for some related work.
\end{rem}

\subsection*{Acknowledgements}
This research was partly supported by NSF grants no. CCF-0635355 and EMSW21-RTG. The second author would like to thank ICERM for an invitation to the Spring 2012 Semester Program on Complex and Arithmetic Dynamics, which facilitated the development of many of these ideas. We would like to thank Joseph Silverman for helpful comments and observations on a preliminary version of this paper, and Michael Zieve for suggesting the connection to group theory.

\bibliographystyle{plain}
\bibliography{AffineLinearFunctionalGraphs}
\nocite{*}
\end{document}